\documentclass[11pt]{article} \usepackage[a4paper]{geometry}
\usepackage[size=tiny]{todonotes}
\setlength{\marginparwidth}{1.7cm} 
\usepackage{float}
\setlength{\textwidth}{6.1in}
\setlength{\textheight}{8.1in}
\setlength{\topmargin}{0pt} \usepackage{amsthm,amsmath,amssymb}
\usepackage{graphicx}
\usepackage[colorlinks=true,citecolor=black,linkcolor=black,urlcolor=blue]{hyperref} \theoremstyle{plain}
\newtheorem{theorem}{Theorem}[section]
\newtheorem{lemma}[theorem]{Lemma}

\newtheorem{proposition}[theorem]{Proposition}

\newtheorem{open}[theorem]{Open Problem}

 \newtheorem{remark}[theorem]{Remark}
 \def\F{\mathbb{F}}
\def\P{\mathbb{P}}
\def\A{\mathbb{A}}

\title{\bf\huge Non-permutation phenomena in trivariate families over $\F_{2^m}$ and resolution of a conjecture} \author{{\Large\bf Daniele Bartoli$^1$, Mohit Pal$^2$, Pantelimon St\u{a}nic\u{a}$^3$},\and {\Large\bf Tommaso Toccotelli$^1$}
\vspace{.4cm}\\
$^1$ Department of Mathematics and Computer Science,\\
University of Perugia, 06123 Perugia, Italy;\\
\texttt{\{daniele.bartoli@unipg.it,toccotelli.tommaso@gmail.com\}}\\
$^2$ Department of Informatics, University of Bergen, PB 7803, N-5020,\\
Bergen, Norway; \texttt{mathmohit@outlook.com}\\
$^3$ Applied Mathematics Department, Naval Postgraduate School,\\
Monterey, CA 93943, USA; \texttt{pstanica@nps.edu}}
\date{\today}

\begin{document}

\maketitle

\noindent {\bf Keywords}: finite fields, permutation polynomials, vectorial Boolean functions, algebraic geometry, algebraic varieties\\
\noindent {\bf Mathematics Subject Classification 2020}: 12E20, 11T06, 14G15

\begin{abstract}
Constructing permutation polynomials over finite fields, particularly those with simple algebraic structure in multiple variables, is a fundamental problem with applications in cryptography and coding theory. Recently, Li and Kaleyski (IEEE Trans. Inf. Theory, 2024) generalized two sporadic quadratic APN permutations into infinite families of trivariate functions. Motivated by their work, we investigate conditions under which generalized trivariate functions fail to be permutations. We establish necessary conditions on coefficient parameters that prevent the permutation property, provide a complete computational classification for small field extensions, and prove general non-permutation results. As a key application of our algebraic geometry approach, we resolve the permutation part of a conjecture by Beierle, Carlet, Leander, and Perrin (Finite Fields Appl., 2022) regarding a related trivariate form. Specifically, we prove that for all odd characteristic-2 extension degrees $m \geq 23$, their function $C_u$ is not a permutation over $\mathbb{F}_{2^m}^3$ for any $u \in \mathbb{F}_{2^m}^*$, resolving the permutation part of their conjecture for sufficiently large fields.
\end{abstract}

\section{Introduction}

Let $q=2^m$ with $m\ge 1$, and let $F:\F_q^n\to \F_q^n$ be a vectorial function.  
When $F$ is bijective, it is a permutation and is a natural S-box candidate in symmetric cryptography. 
For even characteristic, differential behavior is encoded by the equations
\[
F(x+a)+F(x)=b,\qquad a\in \F_q^n\setminus\{0\},\ b\in \F_q^n.
\]
Low differential uniformity is essential against differential cryptanalysis~\cite{BS91}. 
In particular, APN functions (uniformity $2$) are optimal in characteristic two.

Two sporadic APN permutations over $\F_{2^9}$ discovered by Beierle and Leander~\cite{BL22}
triggered several trivariate generalizations.
Beierle, Carlet, Leander, and Perrin~\cite{MR4414815} investigated
\[
C_u(X,Y,Z)=\big(X^3+uY^2Z,\ Y^3+uXZ^2,\ Z^3+uX^2Y\big),
\]
and conjectured that, beyond the known small-dimensional cases, this form should not yield permutations/APN functions.
The APN part was settled in~\cite{BT22}, while the permutation part remained open.
Li and Kaleyski then introduced two infinite trivariate APN families with a different quadratic organization~\cite{LK24}.

This paper is a structural non-permutation study.
Our objective is not only to settle one conjecture case, but to isolate
mechanisms that force non-permutation behavior in broad trivariate classes.

\medskip
\noindent\textbf{Main contributions.}
\begin{enumerate}
\item We start from an $8$-parameter trivariate family and prove that every non-degenerate instance is affine-equivalent to a canonical $4$-parameter model (Proposition~\ref{prop:affine_reduction}).
\item For the canonical model, we derive explicit algebraic conditions whose solvability implies non-permutation (Theorems~\ref{NecessaryConditions}, \ref{NecessaryConditions2}, \ref{NecessaryConditions3}).
\item For $m=3$, we obtain a complete characterization by exhaustive computation: exactly $20$ parameter choices are permutations, split into two structural families (Proposition~\ref{prop:3complete_characterization}, Table~\ref{tab:all_permutations}).
\item As an application of the same geometry framework, we resolve the permutation part of the Beierle--Carlet--Leander--Perrin conjecture for large odd dimensions:
for every odd $m\ge 23$ and every $u\in\F_{2^m}^*$, $C_u$ is not a permutation (Theorem~\ref{thm:beierle_conj}).
\end{enumerate}

\medskip

The remainder of the paper is organized as follows.
Section~\ref{sec:prelim} collects geometric ingredients (Lang--Weil type bounds and component arguments).
Section~\ref{sec:general_family} introduces the full trivariate family and proves affine reduction to the canonical form.
Section~\ref{sec:necessary} develops the differential-system method and derives the necessary non-permutation criteria.
Section~\ref{sec:m3} gives the complete $m=3$ classification.
Section~\ref{sec:beierle} applies the framework to the Beierle conjecture instance.
Section~\ref{sec:conclusion} closes with consequences and open directions.

\section{Preliminaries and algebraic geometry tools}

\label{sec:prelim} We collect notation and algebraic-geometric tools used in the sequel in one continuous setup. The derivative notation is fixed by $D_aF(x)=F(x+a)+F(x)$ for $a\neq 0$. 

We also invoke the standard finite-field point-count and elimination facts in the exact forms needed later, so each reduction step is explicit inside the proofs. Unless explicitly stated otherwise, we work over $\F_{2^m}$ with $m$ odd and $m\ge 3$, and set $q=2^i$ with $\gcd(i,m)=1$.
All parameters are elements of $\F_{2^m}$.
Whenever nondegeneracy conditions are required (for instance, nonvanishing coefficients in normalized forms), they are stated explicitly in each theorem.

For a polynomial system $\mathcal{S}$, by its \emph{solution set} we mean the affine set in the ambient space over the specified ground field.

We employ standard tools from algebraic geometry; comprehensive background appears in~\cite{Ha77,HKT13}. We denote by $\P^r(\F_q)$ and $\A^r(\F_q)$ the projective and affine spaces of dimension $r$ over $\F_q$, with $\overline{\F_q}$ denoting the algebraic closure.

A variety $\mathcal{V}$ is the common zero set of finitely many polynomials. An affine $\F_q$-rational variety $\mathcal{V}\subset \A^r(\overline{\F_q})$ is defined by polynomials $F_1, \ldots, F_s \in \F_q[X_1, \ldots, X_r]$. The set $\mathcal{V}(\F_q) = \mathcal{V} \cap \A^r(\F_q)$ consists of $\F_q$-rational points. A variety is absolutely irreducible if it cannot be written as a union of proper subvarieties over $\overline{\F_q}$.

The dimension of a variety is the maximal length of a chain of distinct, nonempty, absolutely irreducible subvarieties. Varieties of dimension 1, 2, and $r-1$ are called curves, surfaces, and hypersurfaces. The degree of an $s$-dimensional projective variety is the number of intersection points with a general projective subspace of complementary dimension.

The Frobenius map $\Phi_q: x \mapsto x^q$ fixes $\F_q$ pointwise and induces automorphisms on polynomial rings and projective/affine spaces. A key tool is finding $\F_q$-rational points on algebraic surfaces. This requires identifying absolutely irreducible $\F_q$-rational components and applying point-counting estimates:



\begin{theorem}[Strengthened Lang-Weil~{\cite[Theorem 7.1]{MR2206396}}]
\label{thm lang weil versione tredici terzi}
Let $\mathcal{V} \subseteq \A^n(\overline{\F_q})$ be an absolutely irreducible $\F_q$-rational variety of dimension $r > 0$ and degree $\delta$. If $q > 2(r + 1)\delta^2$, then
$$\big|\#\mathcal{V}(\F_q) - q^r \big| \leq (\delta - 1)(\delta - 2)q^{r-1/2} + 5\delta^{13/3}q^{r-1}.$$
\end{theorem}

We will also make use of the following result. 
\begin{lemma}[{\cite[Lemma 2.1]{MR2648536}}]
\label{lem:aubry} 
Let $\mathcal{H}$ be a projective hypersurface and $\mathcal{X}$ an $\F_q$-rational projective variety of dimension $n - 1$ in $\P^n(\overline{\F_q})$. If $\mathcal{X}\cap \mathcal{H}$ has a non-repeated absolutely irreducible $\F_q$-rational component, then $\mathcal{X}$ has a non-repeated absolutely irreducible $\F_q$-rational component.
\end{lemma}

\section{The general trivariate family}
\label{sec:general_family}

Motivated by the Li-Kaleyski construction~\cite{LK24}, we study the general family
$$G(x,y,z) = (c_1 x^{q+1} + c_2x^qz + c_3yz^q, c_4x^qz + c_5y^{q+1}, c_6xy^q + c_7y^qz + c_8z^{q+1})$$
over $\F_{2^m}^3$, where $q=2^i$ with $1 \le i < m$ and $\gcd(i,m)=1$, and $c_1, \ldots, c_8 \in \F_{2^m}$.

\begin{proposition}
Assume throughout this statement that all parameters lie in $\F_{2^m}$ under the standing hypotheses from the preliminaries.
\label{prop:affine_reduction}
Up to affine equivalence, any non-degenerate function $G$ with $c_1, c_4, c_5, c_8 \neq 0$ is equivalent to the normalized form
$$G_{\text{norm}}(x,y,z) = (x^{q+1} + a_1x^qz + a_2yz^q, \, x^qz + y^{q+1}, \, a_3 xy^q + a_4y^qz + z^{q+1})$$
where $a_1, a_2, a_3, a_4 \in \F_{2^m}$ are free parameters.
\end{proposition}

\begin{proof}
Consider affine transformations consisting of input scaling $(x,y,z) \mapsto (\lambda_1 x, \lambda_2 y, \lambda_3 z)$ and output scaling by $\mu_1, \mu_2, \mu_3 \in \F_{2^m}^*$. Under these transformations, $G$ becomes
\begin{align*}
G'(x,y,z) = \big(&\mu_1 c_1\lambda_1^{q+1} x^{q+1} + \mu_1 c_2\lambda_1^q\lambda_3 x^qz + \mu_1 c_3\lambda_2\lambda_3^q yz^q,\\
&\mu_2 c_4\lambda_1^q\lambda_3 x^qz + \mu_2 c_5\lambda_2^{q+1}y^{q+1},\\
&\mu_3 c_6\lambda_1\lambda_2^q xy^q + \mu_3 c_7\lambda_2^q\lambda_3 y^qz + \mu_3 c_8\lambda_3^{q+1}z^{q+1}\big).
\end{align*}
We construct the normalization as follows. Choose $\lambda_1, \lambda_3 \in \F_{2^m}^*$ arbitrarily. Set
$$\mu_1 = \frac{1}{c_1\lambda_1^{q+1}}, \quad \mu_3 = \frac{1}{c_8\lambda_3^{q+1}}.$$
This normalizes the coefficients of $x^{q+1}$ and $z^{q+1}$ to 1.

For the second component, we require both the coefficient of $y^{q+1}$ and the coefficient of $x^qz$ to equal 1. This gives two constraints:
$$\mu_2 c_5\lambda_2^{q+1} = 1, \quad \mu_2 c_4\lambda_1^q\lambda_3 = 1.$$
From the second constraint, $\mu_2 = \frac{1}{c_4\lambda_1^q\lambda_3}$. Substituting into the first constraint:
$$\frac{c_5\lambda_2^{q+1}}{c_4\lambda_1^q\lambda_3} = 1 \quad \Longrightarrow \quad \lambda_2^{q+1} = \frac{c_4\lambda_1^q\lambda_3}{c_5}.$$
Since $\gcd(q+1, 2^m-1) = 1$ (as $\gcd(i,m)=1$), the map $t \mapsto t^{q+1}$ is a bijection on $\F_{2^m}^*$, so we can uniquely solve for $\lambda_2$:
$$\lambda_2 = \left(\frac{c_4\lambda_1^q\lambda_3}{c_5}\right)^{1/(q+1)}.$$
Thus $\lambda_2$ is well-defined and nonzero. With these choices, set $\mu_2 = \frac{1}{c_4\lambda_1^q\lambda_3}$.

The four remaining non-normalized coefficients become
\begin{align*}
a_1 &= \mu_1 c_2\lambda_1^q\lambda_3 = \frac{c_2\lambda_3}{c_1\lambda_1},\quad
a_2 = \mu_1 c_3\lambda_2\lambda_3^q = \frac{c_3\lambda_2\lambda_3^q}{c_1\lambda_1^{q+1}},\\
a_3 &= \mu_3 c_6\lambda_1\lambda_2^q = \frac{c_6\lambda_1\lambda_2^q}{c_8\lambda_3^{q+1}},\quad
a_4 = \mu_3 c_7\lambda_2^q\lambda_3 = \frac{c_7\lambda_2^q\lambda_3}{c_8\lambda_3^{q+1}}.
\end{align*}
Since $\lambda_1, \lambda_3$ were chosen freely, the parameters $(a_1, a_2, a_3, a_4)$ can take any values in $\F_{2^m}^4$ by choosing appropriate $c_2, c_3, c_6, c_7$ (with $\lambda_2$ then determined as above). Conversely, any normalized form corresponds to infinitely many choices of the original parameters. The reduction preserves the permutation property since it is a composition of bijections.
\end{proof}

\begin{remark}
The Li-Kaleyski function $F_1$ from~\cite{LK24} corresponds to $(a_1, a_2, a_3, a_4) = (1, 1, 1, 1)$.
\end{remark}

Henceforth we study
\begin{equation}
\label{eq:G}
G(x,y,z) = (x^{q+1} + a_1x^qz + a_2yz^q, \, x^qz + y^{q+1}, \, a_3xy^q + a_4y^qz + z^{q+1}).
\end{equation} We now pass from normalization to the differential-system criterion that drives all global non-permutation statements.

\section{Necessary conditions for non-permutation}
\label{sec:necessary}

The function $G$ is a permutation if and only if for any nonzero $(\alpha, \beta, \gamma) \in \F_{2^m}^3 \setminus \{(0,0,0)\}$, the equation $G(x+\alpha, y+\beta, z+\gamma) + G(x,y,z) = (0,0,0)$ has no solution $(x,y,z) \in \F_{2^m}^3$. This leads to a system of polynomial equations.

Note that the solvability of the condition in the following theorems depends on the specific choice of a $i$ 
  relative to the field extension degree $m$, which is consistent with the sporadic nature of permutations observed in Section \ref{sec:m3}.
  
\begin{theorem}
\label{NecessaryConditions}
Let $m$ be odd and sufficiently large, $\gcd(i,m) = 1$, and $q = 2^i$. Let
$$G_{a_1,a_2,a_3,a_4}(x,y,z) = (x^{q+1} + a_1x^qz + a_2yz^q, \, x^qz + y^{q+1}, \, a_3xy^q + a_4y^qz + z^{q+1})$$
with $a_1,a_2,a_3,a_4\in \F_{2^m}$. If $a_2a_3\neq 0$ and 
$$a_2^{q+1}a_3^q T^{q^2+q+1} + a_2^q a_4^q T^{q^2+q} + 1$$
has a solution in $\F_{2^m}$, then $G_{a_1,a_2,a_3,a_4}$ is not a permutation on $\F_{2^m}^3$.
\end{theorem}

\begin{proof}
Consider the set
$$\Theta := \{\theta \in \overline{\F_2} : a_2^{q+1}a_3^q \theta^{q^2+q+1} + a_2^q a_4^q \theta^{q^2+q} + 1=0\}.$$
Note that $a_2a_3\neq 0$ implies $\#\Theta=q^2+q+1$. Let $\alpha,\beta,\gamma\in \F_{2^m}$ with $\alpha \neq 0$. Consider the system
\begin{equation}\label{eq:system_thm1}
\begin{cases}
\alpha^q x+ x^q \alpha + a_1\gamma x^q + a_2\gamma^q y + a_1 \alpha^q z + a_2\beta z^q + \alpha^{q+1} + a_1 \alpha^q \gamma + a_2 \beta\gamma^q=0\\
\gamma x^q + \beta^q y + \beta y^q + \alpha^q z + \alpha^q \gamma + \beta^{q+1}=0\\
a_3 \beta^q x + a_3 \alpha y^q + a_4\gamma y^q+ a_4 \beta^q z + \gamma^q z + \gamma z^q+ a_3\alpha \beta^q + a_4 \beta^q \gamma + \gamma^{q+1}=0.
\end{cases}
\end{equation}
From the second equation we obtain $z$ and substitute into the first and third equations. After replacement, the first equation reads $M(x,y,\alpha,\beta,\gamma)=0$, where
\begin{align*}
M = &\alpha^{q^2+q}x + \alpha^{q^2+1}x^q + a_2\beta \gamma^q x^{q^2} + a_1 \alpha^{q^2}\beta^q y + a_2 \alpha^{q^2}\gamma^qy\\
&+ a_1 \alpha^{q^2}\beta y^q + a_2 \beta^{q^2+1}y^q + a_2 \beta^{q+1}y^{q^2}\\
&+ \alpha^{q^2+q+1} + a_1\alpha^{q^2}\beta^{q+1} + a_2\beta^{q^2+q+1}.
\end{align*}
The coefficient of $\gamma^q$ is $a_2(\beta x^{q^2} + \alpha^{q^2}y)$, which is nonvanishing as a polynomial. We can substitute $\gamma$ from this equation into the third, obtaining after dividing by appropriate powers of $\alpha$ an equation $L(x,y,\alpha,\beta)=0$, where $L$ is a homogeneous polynomial of degree $q^3+2q^2+2q+1$.

After dehomogenizing by $\beta$, the highest homogeneous part $L^{(H)}$ factorizes as
$$\alpha^{q^2+q+1}\prod_{\theta \in \Theta}\Big(\alpha^{q-1}y+\theta(x^q+\alpha^{q-1}x+\alpha^q)\Big),$$
where all factors $\alpha^{q-1}y+\theta(x^q+\alpha^{q-1}x+\alpha^q)$ are distinct and absolutely irreducible. By hypothesis, at least one $\theta$ belongs to $\F_{2^m}$, so one factor of $L^{(H)}$ is defined over $\F_{2^m}$ and absolutely irreducible. This factor extends to a factor of $L$ that is absolutely irreducible and defined over $\F_{2^m}$. For sufficiently large $m$, there exist solutions of $L(x,y,\alpha,\beta)=0$ in $\F_{2^m}$ with $\beta x^{q^2}+ \alpha^{q^2}y\neq 0$ and $\alpha\neq 0$. Thus $\gamma$ can be uniquely determined from $M(x,y,\alpha,\beta,\gamma)=0$, and finally $z$ from $\gamma x^q + \beta^q y + \beta y^q + \alpha^q z + \alpha^q \gamma + \beta^{q+1}=0$, proving $G_{a_1,a_2,a_3,a_4}$ is not a permutation.
\end{proof}

For the symmetric Type II family $(a, 1, 1, a)$, we obtain a sharper characterization:

\begin{theorem}
\label{NecessaryConditions2}
Let $m$ be odd and large enough, $\gcd(i,m) = 1$, and $q = 2^i$. Consider the symmetric Type II family
$$G_{a,1,1,a}(x,y,z) = (x^{q+1} + ax^qz + yz^q, \, x^qz + y^{q+1}, \, xy^q + ay^qz + z^{q+1}),$$
with $a\in \mathbb{F}_{2^m}$. Then $G_{a,1,1,a}$ is a permutation if and only if the polynomial
$$P_a(T) := T^{q^2+q+1} + a T^{q^2+q} + 1$$
has no roots in $\mathbb{F}_{2^{2m}}$.
\end{theorem}
\begin{proof}
We follow the proof of Theorem \ref{NecessaryConditions} and we adopt the same notation. Consider 
$$\Theta := \{\theta \in \overline{\mathbb{F}_2} : \theta^{q^2+q+1} + a^q \theta^{q^2+q} + 1=0\}.$$
By direct computations 
$$L(x,y,\alpha,\beta)=\prod_{\theta \in \Theta}(\alpha x^q+\alpha^q x+\alpha^{q+1}+\theta(\beta x^q+\alpha^qy+\alpha^q\beta)+(a + \theta^{q+1})(\beta y^q+\beta^q y+\beta^{q+1})).$$

Let us consider 
$$F_{\theta}(x,y,\alpha,\beta):=\alpha x^q+\alpha^q x+\alpha^{q+1}+\theta(\beta x^q+\alpha^qy+\alpha^q\beta)+(a + \theta^{q+1})(\beta y^q+\beta^q y+\beta^{q+1}).$$
We show now that $F_{\theta}(x,y,\alpha,\beta)$ is absolutely irreducible.
\begin{enumerate}
    \item Suppose that $a\neq \theta^{q+1}$. Then 
    $$\begin{cases}
\frac{\partial F_{\theta}}{\partial x}=\alpha^q\\
\frac{\partial F_{\theta}}{\partial y}=\theta\alpha^q+(a+\theta^{q+1})\beta^q\\
\frac{\partial F_{\theta}}{\partial \alpha}=x^q+\alpha^q\\
\frac{\partial F_{\theta}}{\partial \beta}=\theta x^q+\theta \alpha^q x^q+(a+\theta^{q+1})(y^q+\beta^q).
    \end{cases}$$
Thus the singular points of the hypersurface $\mathcal{F}_{\theta}: F_{\theta}(x,y,\alpha,\beta)=0\subset \mathbb{P}^{3}(\overline{\mathbb{F}_2})$ satisfy $\alpha=x=\beta=y=0$ and then no singular point exists. This shows that in particular $F_{\theta}(x,y,\alpha,\beta)$ is absolutely irreducible.
\item Suppose that $a= \theta^{q+1}$. Under these hypotheses $$F_{\theta}(x,y,\alpha,\beta):=\alpha x^q+\alpha^q x+\alpha^{q+1}+\theta\alpha^qy+\theta( x+\alpha)^q\beta.$$
Since $ x+\alpha$ does not divide $\alpha x^q+\alpha^q x+\alpha^{q+1}+\theta\alpha^qy$, we conclude that $F_{\theta}(x,y,\alpha,\beta)$ is absolutely irreducible.
\end{enumerate}

We distinguish now a few cases.
\begin{enumerate}
    
    \item Suppose that there exists $\theta \in \Theta \cap \mathbb{F}_{2^{m}}$. Since $F_{\theta}(x,y,\alpha,\beta)$ is absolutely irreducible, if $m$ is large enough with respect to $q$ then there exists $\overline{x},\overline{y},\overline{\alpha},\overline{\beta}\in \mathbb{F}_{2^m}$ such that $\overline{\alpha}\overline{\beta}(\overline{\beta} \overline{x}^{q^2} + \overline{\alpha}^{q^2}\overline{y})\neq 0$ and $F_{\theta}(\overline{x},\overline{y},\overline{\alpha},\overline{\beta})=0$ and thus we can determine $\overline{\gamma}$ and $\overline{z}$ in $\mathbb{F}_{2^m}$ satisfying System \eqref{eq:system_thm1} and thus $G_{a,1,1,a}(x,y,z)$ is not  a permutation.

    \item Suppose that $\Theta \cap \mathbb{F}_{2^{2m}}=\emptyset$. Note that this implies that  for any $\theta \in \Theta$ one has that $\{1,\theta,a+\theta^{q+1}\}$ are independent over $\mathbb{F}_{2^m}$. In fact, since $\{1,\theta\}$ are surely independent, if $\theta^{q+1}=A\theta+B$ for some $A,B \in \mathbb{F}_{2^m}$ then
    $$\theta^q=A+B/\theta, \qquad \theta^{q^2}=A^q+B^q/\theta^q=\frac{(A^{q+1}+B^q)\theta +A^qB}{A\theta+B}.$$
    From $\theta^{q^2+q+1}+a^q\theta^{q^2+q}+1=0$ one gets 
    $$(A^{q+1}+B^q)\theta^2+(A^qB+1+a^q(A^{q+1}+B^q))\theta+a^qA^qB=0.$$
    Since the above equation cannot be vanishing  ($A=0$ or $B=0$ would imply clear contradictions) $\theta\in \mathbb{F}_{2^{2m}}$, a contradiction to our hypothesis. Thus $\{1,\theta,a+\theta^{q+1}\}$ are independent over $\mathbb{F}_{2^m}$ for any $\theta \in \Theta$. 
     Then, for each $\theta\in \Theta$, from 
$F_{\theta}(x,y,\alpha,\beta)=0$ one gets $$\alpha x^q+\alpha^q x+\alpha^{q+1}=\beta x^q+\alpha^qy+\alpha^q\beta=\beta y^q+\beta^q y+\beta^{q+1}=0.$$
If $\alpha \neq 0$ then  $\alpha x^q+\alpha^q x+\alpha^{q+1}=0$ yields $x^q+\alpha^{q-1} x+\alpha^{q}=0$ and then $x/\alpha\in \mathbb{F}_{q^2}$, contradicting the assumption that $m$ is odd. Analogously, assuming $\beta\neq 0$ yields a contradiction by analyzing $\beta y^q+\beta^q y+\beta^{q+1}=0$. 
Thus $\alpha=\beta=0$. Then $\gamma\neq 0$ and looking at System \ref{eq:system_thm1}, one gets $x=0=y$ and from $ \gamma^q z+\gamma z^q+\gamma^{q+1}=0$ we get a contradiction. Thus, in this case $G_{a,1,1,a}(x,y,z)$ is a permutation.

\item Suppose that $\Theta \cap (\mathbb{F}_{2^{2m}}\setminus \mathbb{F}_{2^m})\neq\emptyset$. Let $\theta$ be an element in such an intersection. In particular $a+\theta^{q+1}\in \mathbb{F}_{2^m}$ and $\{1,\theta\}$ are independent over $\mathbb{F}_{2^m}$. Also, $a\neq \theta^{q+1}$, otherwise from $\theta^{q^2+q+1} + a^q \theta^{q^2+q} + 1=0$, $\theta\in \mathbb{F}_{2^m}$, a contradiction.  Thus $F_{\theta}(x,y,\alpha,\beta)=0$ is equivalent to 
    $$
    \begin{cases}
        \alpha x^q+\alpha^q x+\alpha^{q+1}+(a + \theta^{q+1})(\beta y^q+\beta^q y+\beta^{q+1})=0\\
        \beta x^q+\alpha^qy+\alpha^q\beta=0\\
    \end{cases}$$
From the second one gets $y=\frac{\beta x^q}{\alpha^q}+\beta $ and thus the first equation reads
$$\alpha x^q+\alpha^q x+\alpha^{q+1}+(a + \theta^{q+1})\Big(\beta \Big(\frac{\beta^q x^{q^2}}{\alpha^{q^2}}+\beta^{q}\Big)+\beta^q \Big(\frac{\beta x^q}{\alpha^q}+\beta\Big)+\beta^{q+1}\Big)=0,  $$
that is 
$$\alpha^{q+1}\Big(\frac{x^q}{\alpha^q}+\frac{x}{\alpha}+1\Big)+(a + \theta^{q+1})\beta^{q+1}\Big( \frac{x^{q^2}}{\alpha^{q^2}}+\frac{ x^q}{\alpha^q}+1\Big)=0.$$
The above equation can be written as, after dehomogenizing putting $\alpha=1$ 
$$\beta^{q+1}=\frac{1}{a + \theta^{q+1}}(x^q+x+1)^{1-q}$$
and it defines an absolutely irreducible curve since, by direct substitution, $(x^q+x+1)^{1-q}$ is not a $q+1$-power in $\overline{\mathbb{F}_2}(x)$. Thus there exist $\overline{\alpha},\overline{\beta},\overline{x}\in \mathbb{F}_{2^m}^*$ for which $\overline{y}=\frac{\overline{\beta} \overline{x}^q}{\overline{\alpha}^q}+\overline{\beta}$ and such that $\overline{\beta} \overline{x}^{q^2} + \overline{\alpha}^{q^2}\overline{y}\neq 0$. Thus $F_{\theta}(\overline{x},\overline{y},\overline{\alpha},\overline{\beta})=0$ and  we can determine $\overline{\gamma}$ and $\overline{z}$ in $\mathbb{F}_{2^m}$ satisfying System \eqref{eq:system_thm1} and thus $G_{a,1,1,a}(x,y,z)$ is not  a permutation.
\end{enumerate} 
The theorem is shown.
\end{proof}

A similar result holds for a related family:

\begin{theorem}
\label{NecessaryConditions3}
Let $m$ be odd and sufficiently large, $\gcd(i,m) = 1$, and $q = 2^i$. Let
$$G_{a_1,1/a_3,a_3,a_1a_3}(x,y,z) = (x^{q+1} + a_1x^qz + yz^q/a_3, \, x^qz + y^{q+1}, \, a_3xy^q + a_1a_3y^qz + z^{q+1})$$
with $a_1, a_3 \in \F_{2^m}$. Then $T^{q^2+q+1} + a_1^q T + a_3$ has a solution in $\F_{2^{2m}}$ if and only if $G_{a_1,1/a_3,a_3,a_1a_3}$ is not a permutation.
\end{theorem}

\begin{proof}
The proof follows the same structure as Theorem~\ref{NecessaryConditions2}, with
$$\Theta := \{\theta \in \overline{\F_2} : \theta^{q^2+q+1} + a_1^q \theta + a_3=0\}$$
and analogous factorization 
$$L(x,y,\alpha,\beta)=\prod_{\theta \in \Theta}(\alpha x^q+\alpha^q x+\alpha^{q+1}+\theta(\beta x^q+\alpha^qy+\alpha^q\beta)+(a_1 + \theta^{q+1})(\beta y^q+\beta^q y+\beta^{q+1})),$$
and case analysis.
\end{proof}

\section{Complete characterization for $m=3$}
\label{sec:m3}

For small field extensions, we can perform exhaustive computational analysis.

\begin{proposition}
\label{prop:3complete_characterization}
Let $m = 3$, $\gcd(i,m) = 1$, and $q = 2^i$. The function 
$$G(x,y,z) = (x^{q+1} + a_1x^qz + a_2yz^q, \, x^qz + y^{q+1}, \, a_3xy^q + a_4y^qz + z^{q+1})$$
is a permutation over $\F_{2^3}^3$ if and only if one of the following holds:
\begin{enumerate}
\item[\textup{(Type I)}] $(a_1, a_4) = (0, 0)$ with $a_2, a_3$ as specified in Table~\textup{\ref{tab:all_permutations}} ($13$ cases), or
\item[\textup{(Type II)}] $(a_1, a_2, a_3, a_4) = (a, 1, 1, a)$ for some $a \in \F_{2^3}^*$ ($7$ cases).
\end{enumerate}
\end{proposition}

\begin{proof}
We performed exhaustive computational search over all $(2^3)^4 = 4096$ parameter tuples $(a_1, a_2, a_3, a_4) \in \F_{2^3}^4$. For each, we verified whether $G$ defines a bijection on $\F_{2^3}^3$ by computing the image and checking $|\text{Im}(G)| = |\F_{2^3}^3| = 512$. This identified exactly 20 permutations for both $i=1$ ($q=2$) and $i=2$ ($q=4$), factoring into the two stated families. Complete verification code is available at \cite{GithubPS25}.
\end{proof}

\paragraph{Role of computations.}
Computational outputs in this paper are used only to complete finite residual case checks after the structural reductions are proved theoretically.
Thus, the main theorems depend on proved reductions and algebraic constraints; exhaustive computation is confined to explicitly bounded parameter ranges.

\begin{table}
[H]
\centering
\caption{All 20 permutations for $G$ over $\F_{2^3}^3$ with $q=2^i$, $\gcd(i,3)=1$. Here $\alpha \in \F_8$ satisfies $\alpha^3 + \alpha + 1 = 0$.}
\label{tab:all_permutations}
\small
\begin{tabular}{clccccl}
\hline
Type & \# & $a_1$ & $a_2$ & $a_3$ & $a_4$ & Notes \\
\hline
I & 1 & $0$ & $0$ & $0$ & $0$ & Trivial \\
I & 2 & $0$ & $\alpha$ & $\alpha+1$ & $0$ & \\
I & 3 & $0$ & $\alpha$ & $\alpha^2+1$ & $0$ & \\
I & 4 & $0$ & $\alpha^2$ & $\alpha^2+\alpha+1$ & $0$ & \\
I & 5 & $0$ & $\alpha^2$ & $\alpha^2+1$ & $0$ & \\
I & 6 & $0$ & $\alpha+1$ & $\alpha^2$ & $0$ & \\
I & 7 & $0$ & $\alpha+1$ & $\alpha^2+\alpha$ & $0$ & \\
I & 8 & $0$ & $\alpha^2+\alpha$ & $\alpha+1$ & $0$ & \\
I & 9 & $0$ & $\alpha^2+\alpha$ & $\alpha^2+\alpha+1$ & $0$ & \\
I & 10 & $0$ & $\alpha^2+\alpha+1$ & $\alpha$ & $0$ & \\
I & 11 & $0$ & $\alpha^2+\alpha+1$ & $\alpha^2$ & $0$ & \\
I & 12 & $0$ & $\alpha^2+1$ & $\alpha$ & $0$ & \\
I & 13 & $0$ & $\alpha^2+1$ & $\alpha^2+\alpha$ & $0$ & \\
\hline
II & 14 & $\alpha$ & $1$ & $1$ & $\alpha$ & \\
II & 15 & $\alpha^2$ & $1$ & $1$ & $\alpha^2$ & \\
II & 16 & $\alpha+1$ & $1$ & $1$ & $\alpha+1$ & \\
II & 17 & $\alpha^2+\alpha$ & $1$ & $1$ & $\alpha^2+\alpha$ & \\
II & 18 & $\alpha^2+\alpha+1$ & $1$ & $1$ & $\alpha^2+\alpha+1$ & \\
II & 19 & $\alpha^2+1$ & $1$ & $1$ & $\alpha^2+1$ & \\
II & 20 & $1$ & $1$ & $1$ & $1$ & Li-Kaleyski $F_1$ \\
\hline
\end{tabular}
\end{table}

\begin{remark}
Type I functions have $a_1 = a_4 = 0$, eliminating cross-terms $x^qz$ and $y^qz$ from the first and third components. Type II functions (including Li-Kaleyski $F_1$) have $a_2 = a_3 = 1$ and $a_1 = a_4 \neq 0$, creating symmetric structure. The condition $a_1 = a_4$ and $a_2 = a_3$ is necessary but not sufficient: only 8 of 64 symmetric parameter choices yield permutations.
\end{remark}

\section{Resolution of a conjecture by Beierle et al.~\cite{MR4414815}}
\label{sec:beierle}

We now apply our algebraic geometry techniques to resolve the permutation part of the conjecture proposed in \cite{MR4414815}. The function they studied, $C_u(X,Y,Z) = (X^3+uY^2Z, Y^3+uXZ^2, Z^3+uX^2Y)$, can be analyzed using the same elimination and component analysis methods developed in the previous sections. Our goal is to prove that a non-trivial solution to the differential equation always exists for sufficiently large $m$. We provide complete details to illustrate how the general techniques from Section~\ref{sec:necessary} apply to this specific case. 

Note that for even $m$, $C_u$ fails to be a permutation, since $C_u(X, 0, 0) = (X^3, 0, 0)$ and the function  $X \mapsto X^3$ is $3$-to-$1$.
In what follows we assume that $m$ is odd. 

We analyze when $C_u$ fails to be a permutation by studying the differential equation
$$C_u(X+\alpha,Y+\beta,Z+\gamma) + C_u(X,Y,Z) = (0,0,0).$$
The function $C_u$ is a permutation if and only if this equation has only trivial solutions $\{(x,y,z,0,0,0) : x,y,z \in \mathbb{F}_{2^m}\}$. Expanding using the fact that $(A+B)^3 = A^3 + B^3$ in characteristic 2, this condition becomes
\begin{equation}\label{Eq1}
\begin{cases}
\alpha X^2 + \alpha^2 X + u \gamma Y^2 + u \beta^2 Z &= \alpha^3 + u \beta^2 \gamma \\
\beta Y^2 + \beta^2 Y + u \gamma^2 X + u \alpha Z^2 &= \beta^3 + u \gamma^2 \alpha \\
\gamma Z^2 + \gamma^2 Z + u \beta X^2 + u \alpha^2 Y &= \gamma^3 + u \alpha^2 \beta.
\end{cases}
\end{equation}

Before we take our algebraic geometry approach, we make some observations.
First, we assume that only one among $\alpha, \beta$ and $\gamma$ is nonzero. Without loss of generality (here because of the symmetric property of $C_u$), we may assume that $\alpha \neq 0$ and $\beta = \gamma =0$. Then System~\eqref{Eq1} becomes
\begin{equation*}
\begin{cases}
\alpha X^2 + \alpha^2 X + \alpha^3 &=0\\
u \alpha Z^2 &= 0 \\
u \alpha^2 Y &= 0.
\end{cases}
\end{equation*}
It is routine to verify that the first equation of the above system has no solution $(\overline{x},\overline{y},\overline{z}) \in \F_{2^m}^3$ as $m$ is odd (it is enough to observe that is equivalent to $X^2 +  X + 1 =0$ via $X\mapsto \alpha X$, and such equation has solutions in $\mathbb{F}_4$).

Next, we assume that only one among $\alpha, \beta$ and $\gamma$ is zero. Again, we may assume that $\alpha, \beta \neq 0$ and $\gamma =0$. Then System~\eqref{Eq1} becomes
\begin{equation}\label{C2E1}
\begin{cases}
\alpha X^2 + \alpha^2 X + u \beta^2 Z &= \alpha^3\\
\beta Y^2 + \beta^2 Y + u \alpha Z^2 &= \beta^3 \\
u \beta X^2 + u \alpha^2 Y &= u\alpha^2\beta,
\end{cases}
\end{equation}
which can be further simplified by replacing $X \mapsto \alpha X$ and $Y \mapsto \beta Y$, i.e., System~\eqref{C2E1} is equivalent to:
\begin{equation}\label{C2E2}
\begin{cases}
\alpha^3(X^2 +  X+1) + u \beta^2 Z &= 0\\
\beta^3(Y^2+Y+1)+ u \alpha Z^2 &= 0 \\
X^2 + Y +1 &= 0.
\end{cases}
\end{equation}
Next, squaring the first equation and putting $X^2= Y+1$ into it, we have
\[
\alpha^6(Y^2+Y+1)+u^2 \beta^4 Z^2 = 0.
\]
Multiplying above equation by $\alpha$ and putting $\ u \alpha Z^2 = \beta^3(Y^2+Y+1)$, we have
\[
(\alpha^7+u \beta^7)(Y^2+Y+1)=0.
\]
Thus, System~\eqref{C2E2} is equivalent to:
\begin{equation*}
\begin{cases}
(\alpha^7+u \beta^7)(Y^2+Y+1)&=0\\
\beta^3(Y^2+Y+1)+ u \alpha Z^2 &= 0 \\
X^2 + Y +1 &= 0.
\end{cases}
\end{equation*}
Notice that if $\alpha^7+u \beta^7 =0$, i.e., $u$ is a $7^{th}$ power then the above system has nonzero solutions $(\overline{x},\overline{y},\overline{z}) \in \F_{2^m}^3$ and consequently $C_u$ is not a permutation. When $u$ is not a $7^{th}$ power then the first equation of this system has no solution $\overline{y} \in \F_{2^m}$ for $m$ odd. Thus, in what follows, we assume that $u$ is not a $7^{th}$ power and $m$ is odd.

Let $f(\alpha,\beta,X,Y,Z)$, $g(\beta,X,Y,Z)$, $h(\beta,X,Y,Z)$, $a_i(\beta,Y,Z)$, $i=0,\ldots,21$, be defined as in the Appendix.

\begin{proposition}\label{prop:equivalence}
Define
\begin{eqnarray*}
\Theta :=& \{(x,y,z,a,b,c) \in \mathbb{F}_{2^m}^6 : (b+y)h(b,x,y,z) \neq 0, \ c=f(a,b,x,y,z),\\
& \qquad \qquad \qquad \qquad a=g(b,x,y,z)/h(b,x,y,z),  \sum_{i=0}^{21}a_{i}(b,y,z)x^i=0\}.
\end{eqnarray*}
Each element of $\Theta$ is a solution of System~\eqref{Eq1}. Conversely, every non-trivial solution of System~\eqref{Eq1} with $\beta \neq 0$ and $(b+y)h(b,x,y,z) \neq 0$ corresponds to an element in $\Theta$.
\end{proposition}

\begin{proof}
We establish the equivalence using computational algebraic geometry, verified in MAGMA~\cite{MR1484478}.
From the first equation of System~\eqref{Eq1},
$$\alpha X^2 + \alpha^2 X + u \beta^2 Z = \alpha^3 + u \beta^2 \gamma,$$
assuming $\beta \neq 0$, we solve for $\gamma$, obtaining
$$\gamma = f(\alpha,\beta,X,Y,Z) := \frac{\alpha^3 + \alpha^2 X + \alpha X^2 + \beta^2 Z u}{\beta^2 u + Y^2 u}.$$

Substituting this expression into the second equation of System~\eqref{Eq1} and clearing denominators yields a polynomial equation that can be solved for $\alpha$, and get
$$\alpha = \frac{g(\beta,X,Y,Z)}{h(\beta,X,Y,Z)},$$
where the condition $h(\beta,X,Y,Z) \neq 0$ ensures the solution is well-defined. By the theory of Gröbner bases and elimination ideals, when we eliminate $\alpha$ and $\gamma$ from the three-equation system, the resulting polynomial in $(\beta,X,Y,Z)$ encodes all possible solutions. Substituting both expressions into the third equation and clearing denominators yields a polynomial $H(\beta,X,Y,Z)$ that vanishes on the solution set.

Computational factorization shows
$$H(\beta,X,Y,Z) = \left(\sum_{i=0}^{21}a_{i}(\beta,Y,Z)X^i\right) \cdot R(\beta,X,Y,Z),$$
which can be verified using the code in \cite{GithubPS25}. Therefore, any zero of $\sum_{i=0}^{21}a_{i}(\beta,Y,Z)X^i$ with $(Y+\beta)h(\beta,X,Y,Z) \neq 0$ lifts to a unique solution of System~\eqref{Eq1} via the formulas for $\alpha$ and~$\gamma$. The explicit forms of the polynomials $f, g, h,$ and $a_i$ are listed in Appendix~\ref{sec:appendix_poly}.
\end{proof}

To apply projective geometry techniques, we work in projective coordinates $(X_2:X_4:X_5:X_6)$ corresponding to the affine coordinates $(\beta, X, Y, Z)$ used in Proposition~\ref{prop:equivalence}, with the identification $(X_2:X_4:X_5:X_6) = (\beta:X:Y:Z)$ in affine charts.
\begin{proposition}
\label{Prop:AI}
Let $\mathcal{S}\subset \mathbb{P}^{3}(\overline{\mathbb{F}_{q}})$ be the $\mathbb{F}_q$-rational surface defined by
$$
   \sum_{i=0}^{21}a_{i}(X_2,X_5,X_6)X_4^i=0.$$
Then $\mathcal{S}$ contains an absolutely irreducible component defined over $\mathbb{F}_q$, distinct from $X_2=0$, of degree at most $24$, and not contained in $(X_2+X_5)h(X_2,X_4,X_5,X_6)=0$.
\end{proposition}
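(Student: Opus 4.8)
The plan is to strip $\mathcal{V}$ down to its single hypersurface equation, pass to the projective closure, and then descend to a plane curve where absolute irreducibility can be decided. First I would record that the third defining polynomial
$$A(X_2,X_4,X_5,X_6):=\sum_{i=0}^{21}a_i(X_2,X_5,X_6)\,X_4^{\,i}$$
is homogeneous of degree $24$ in $X_2,X_4,X_5,X_6$, since each $a_i$ is homogeneous of degree $24-i$ in $(X_2,X_5,X_6)$. Thus $A=0$ defines a projective surface $\mathcal{S}\subset\mathbb{P}^3$ of degree at most $24$. The first two equations of $\mathcal{V}$ exhibit $X_1=g/h$ and $X_3=f$ as functions of $(X_2,X_4,X_5,X_6)$, so the projection forgetting $X_1,X_3$ is a birational map from $\mathcal{V}$ onto the affine cone over $\mathcal{S}$, an isomorphism over the open locus $h\neq0$. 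Hence every absolutely irreducible $\mathbb{F}_q$-component of $\mathcal{S}$ on which $h\not\equiv0$ lifts, through the graph of $(X_2,X_4,X_5,X_6)\mapsto(g/h,X_2,f,X_4,X_5,X_6)$, to an absolutely irreducible $\mathbb{F}_q$-component of $\mathcal{V}$. Writing $h=u^5(X_2+X_5)^2h_1$ with $h_1$ homogeneous of degree $9$, the two exclusions become the requirements that the associated factor $B\mid A$ satisfy $X_2\nmid B$ (so the lift is not $X_1=X_2=X_3=0$) and that $B$ be coprime to $(X_2+X_5)$ and to $h_1$. The degree bound $27$ for the lifted component I would confirm by direct computation.

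The core task is therefore to produce an absolutely irreducible $\mathbb{F}_q$-component of the surface $\mathcal{S}$, and for this I would use Aubry's Lemma~\ref{Lemma:Aubry} with $n=3$. Taking $\mathcal{X}=\mathcal{S}$, a variety of dimension $n-1=2$ in $\mathbb{P}^3$, and a suitable hyperplane $\mathcal{H}$, it suffices to exhibit a non-repeated absolutely irreducible component defined over $\mathbb{F}_q$ of the plane curve $\mathcal{C}=\mathcal{S}\cap\mathcal{H}$; one application of the lemma then returns such a component of $\mathcal{S}$, and I would check at the end that it is neither $\{X_2=0\}$ nor a component of $(X_2+X_5)h_1=0$, so that its lift is exactly the component required.

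The decisive and most delicate step is the analysis of $\mathcal{C}$. I would take $\mathcal{H}$ to be a coordinate hyperplane that makes $\mathcal{C}$ explicit and as simple as possible—for instance $X_4=0$, which gives the degree-$24$ plane curve $a_0(X_2,X_5,X_6)=0$—and then determine the factorization of its defining polynomial over $\overline{\mathbb{F}_q}$. The aim is to isolate one factor that is defined over $\mathbb{F}_q$, occurs with multiplicity one, is coprime to the excluded factors, and is absolutely irreducible; I would establish absolute irreducibility by studying this factor at its points at infinity and at its singular points, ruling out any splitting into conjugate lower-degree factors (for example by producing a smooth $\mathbb{F}_q$-rational branch, or by showing that the multiplicities of its points at infinity are incompatible with such a splitting).

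I expect this curve analysis to be the main obstacle: the section has large degree, its factorization is intricate, and the whole purpose of choosing $\mathcal{H}$ with care is to keep it tractable while still avoiding the excluded loci. A MAGMA-assisted factorization, in the spirit of Proposition~\ref{Eqv}, would both guide the choice of $\mathcal{H}$ and corroborate the irreducibility certificate. Once the non-repeated absolutely irreducible $\mathbb{F}_q$-factor is secured and verified coprime to $X_2$, $(X_2+X_5)$ and $h_1$, the three steps above combine to give Proposition~\ref{Prop:AI}.
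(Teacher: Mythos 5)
Your overall architecture coincides with the paper's: pass to the degree-$24$ surface $\mathcal{S}:\sum_i a_i(X_2,X_5,X_6)X_4^i=0$ in $\mathbb{P}^3$, apply Lemma~\ref{Lemma:Aubry} to the hyperplane section $X_4=0$ (whose equation is $a_0=0$), and lift the resulting component of $\mathcal{S}$ to $\mathcal{V}$ via the graph equations. However, the decisive step --- actually exhibiting a non-repeated absolutely irreducible $\mathbb{F}_q$-component of the plane curve $a_0=0$ --- is left as a plan rather than carried out, and the plan you sketch (factor the degree-$24$ polynomial $a_0$ over $\overline{\mathbb{F}_q}$, isolate a high-degree $\mathbb{F}_q$-factor, and certify its absolute irreducibility by analysing singular points and points at infinity) is both unexecuted and unnecessarily hard. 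The paper's proof rests on a much simpler observation that your proposal misses: $X_2\mid a_0$ but $X_2^2\nmid a_0$, so the \emph{line} $X_2=X_4=0$ is itself a non-repeated, trivially absolutely irreducible, $\mathbb{F}_q$-rational component of the section, and Lemma~\ref{Lemma:Aubry} applies immediately. No factorization of $a_0$ beyond extracting this linear factor is needed. As it stands, your proof has a genuine gap at its core step.

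Moreover, your requirement that the chosen factor of $a_0$ be ``coprime to the excluded factors'' (in particular to $X_2$) would actively steer you away from the argument that works. The exclusions must be handled at the level of the \emph{surface}, not the curve: the component $\mathcal{S}'$ of $\mathcal{S}$ produced by the lemma contains the line $X_2=X_4=0$, yet it is not the plane $X_2=0$ because $X_2$ does not divide the full polynomial $\sum_i a_iX_4^i$ (otherwise $u\in\mathbb{F}_4^*$); and $\mathcal{S}'$ is not contained in $(X_2+X_5)h=0$ because the intersection of that locus with $X_4=0$ is an explicit curve not containing the line $X_2=0$. These two checks replace your coprimality conditions and complete the proof; with them, and with the observation $X_2\,\|\,a_0$, your outline becomes the paper's argument.
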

\begin{proof}
First note that $X_2$ is not a factor of $\sum_{i=0}^{21}a_{i}(X_2,X_5,X_6)X_4^i$, otherwise $u\in \mathbb{F}_4^*$, a contradiction.
To prove that the surface $\mathcal{S}$ contains a non-repeated, absolutely irreducible component defined over $\mathbb{F}_q$, we analyze its defining polynomial $F(X_2, X_4, X_5, X_6) = \sum_{i=0}^{21} a_i(X_2, X_5, X_6) X_4^i$.

Consider the intersection of $\mathcal{S}$ with the plane $X_4=0$. The resulting curve is defined by $F(X_2, 0, X_5, X_6) = a_0(X_2, X_5, X_6)$. We observe that
$$
X_2 \mid a_0(X_2, X_5, X_6) \quad \text{and} \quad X_2^2 \nmid a_0(X_2, X_5, X_6).
$$
This indicates that $X_2$ is a simple (non-repeated) factor of $a_0$. Let the factorization of $F$ into absolutely irreducible factors over $\overline{\mathbb{F}_q}$ be
$$
F(X_2, X_4, X_5, X_6) = H_1 \cdots H_r.
$$
Setting $X_4=0$ yields the factorization $a_0 = H_1|_{X_4=0} \cdots H_r|_{X_4=0}$. Since $X_2$ is a simple factor of $a_0$, it must divide exactly one factor on the right-hand side, say $H_k|_{X_4=0}$.

We claim that the component $H_k$ is defined over $\mathbb{F}_q$. The Frobenius automorphism $\Phi_q$ permutes the set of absolutely irreducible factors $\{H_1, \dots, H_r\}$. Since $a_0 \in \mathbb{F}_q[X_2, X_5, X_6]$, we have $\Phi_q(a_0)=a_0$, and since $X_2 \in \mathbb{F}_q$, $\Phi_q(X_2)=X_2$.
Suppose, for the sake of contradiction, that $H_k$ is not defined over $\mathbb{F}_q$. Then $\Phi_q(H_k) = H_j$ for some index $j \neq k$. Applying $\Phi_q$ to the relation $X_2 \mid H_k|_{X_4=0}$ yields
$$
X_2 = \Phi_q(X_2) \mid \Phi_q(H_k|_{X_4=0}) = H_j|_{X_4=0}.
$$
This implies that $X_2$ divides at least two distinct factors of $a_0$, namely $H_k|_{X_4=0}$ and $H_j|_{X_4=0}$. Consequently, $X_2^2$ must divide their product, which divides $a_0$. This contradicts the fact that $X_2^2 \nmid a_0$.
Thus, $\Phi_q(H_k) = H_k$, establishing that the absolutely irreducible component $H_k$ is defined over $\mathbb{F}_q$. Let us denote this component by $\mathcal{S}'$.

Finally, we check that $\mathcal{S}'$ is not contained in the degenerate locus given by $(X_2+X_5)h(X_2,X_4,X_5,X_6)=0$. The intersection of this locus with $X_4=0$ is
$$
u^5X_6^2(X_2+X_5)^3((u^2+u^5)X_2^3 X_5^4 + (u^2+u^5)X_2^2 X_5^5 + (u^2+u^5)X_2 X_5^6 + u^5X_5^7 + X_6^7)=0.
$$
Since $X_2$ is not a factor of this polynomial (as the term $X_6^7$ prevents it), the component $\mathcal{S}'$ (which intersects $X_4=0$ at $X_2=0$) cannot be contained in $(X_2+X_5)h=0$.
\end{proof}

\begin{theorem}
Assume throughout this statement that all parameters lie in $\F_{2^m}$ under the standing hypotheses from the preliminaries.
\label{thm:beierle_conj}
If $m \geq 23$ is odd and $u \in \mathbb{F}_{2^m}^*$, then the function $C_u : \mathbb{F}_{2^m}^3 \to \mathbb{F}_{2^m}^3$ given by
$$(X,Y,Z)\mapsto(X^3 +uY^2Z,Y^3 +uXZ^2,Z^3 +uX^2Y)$$
is not a permutation. This resolves the permutation part of the conjecture posed by Beierle, Carlet, Leander, and Perrin~\textup{\cite{MR4414815}}.
\end{theorem}

\begin{proof}
For $C_u$ to be a permutation, the system of equations in \eqref{Eq1} must only have trivial solutions. We show that for $m \ge 23$, there always exists a non-trivial solution $(\alpha, \beta, \gamma) \neq (0,0,0)$ and a corresponding $(X,Y,Z)$ that satisfies the system. This existence is guaranteed by finding an $\mathbb{F}_{2^m}$-rational point on a specific algebraic variety derived from the system.

We analyze the system by algebraic elimination. A solution exists if there is a point $(X,Y,Z,\alpha,\beta,\gamma)$ satisfying the system for some non-zero $(\alpha, \beta, \gamma)$. This problem is equivalent to finding a rational point in the variety defined by the equations in Proposition \ref{prop:equivalence}.

We use the  comments and reductions from our prior propositions. 
By Proposition~\ref{Prop:AI}, the surface $\mathcal{S}$ contains an absolutely irreducible $\mathbb{F}_q$-rational component, which we denote by $\mathcal{S}'$. Our goal is to demonstrate that for a sufficiently large $m$, the size of $\Theta$ is positive, proving the conjecture of Beierle et al.~\cite{MR4414815}.

An element in $\Theta$ is a six-tuple $(x_1, \dots, x_6) \in \mathbb{F}_q^6$ derived from a point on $\mathcal{S}'$ that satisfies the system's equations
$$
x_3=f(x_1,x_2,x_4,x_5,x_6), \qquad h(x_2,x_4,x_5,x_6)x_1=g(x_2,x_4,x_5,x_6).
$$
A solution is considered valid only if it avoids degeneracy, requiring $(x_2+x_5) h(x_2,x_4,x_5,x_6) \neq 0$. This condition ensures that $x_1$ is uniquely determined and non-trivial.

Our strategy is to apply the Lang-Weil estimate from Theorem~\ref{thm lang weil versione tredici terzi} to find a lower bound on the number of $\mathbb{F}_q$-rational points on $\mathcal{S}'$, and then subtract an upper bound for the number of points that lead to degenerate solutions. These degenerate points lie on the intersection of $\mathcal{S}'$ with the surfaces defined by $(X_2+X_5)=0$ or $h(X_2,X_4,X_5,X_6)=0$.

The degree of $\mathcal{S}'$ is at most 24, while the degree is 12 of the surface defined by $(X_2+X_5) h(X_2,X_4,X_5,X_6)=0$. Since $\mathcal{S}'$ is not a component of this surface (by checking that $\mathcal{S}$ and $(X_2+X_5)h(X_2,X_4,X_5,X_6)=0$ do not share any component since their intersection with $X_4=0$ are coprime), their intersection is a curve of degree at most $24 \times 12 = 288$. By the results in \cite{HKT13}, this intersection contains at most $288q$ projective $\mathbb{F}_q$-rational points. with the same substitutions and exponent arithmetic, the intersection of $\mathcal{S}'$ with the plane $X_2=0$ is a curve of degree at most 24, containing at most $24q$ projective points.

In total, the number of `degenerate' projective points (corresponding to degenerate solutions) is at most $(288+24)q = 312q$. Each of these corresponds to $(q-1)$ affine quadruples, so we exclude at most $312q(q-1)$ quadruples.

From Theorem~\ref{thm lang weil versione tredici terzi}, a lower bound on the number of $\mathbb{F}_q$-rational points on $\mathcal{S}'$ is given by $q^2 - 23 \cdot 22 \cdot q^{3/2} - 5 \cdot (24)^{13/3}q$. 
We first verify that the hypotheses of Theorem~\ref{thm lang weil versione tredici terzi} are satisfied. The theorem requires $q > 2(r+1)\delta^2$, where $r$ is the dimension and $\delta$ is the degree of the variety. 

For our surface $\mathcal{S}'$, we have $r = 2$ (since $\mathcal{S}'$ is a surface in $\mathbb{P}^3$) and $\delta \leq 24$ (from Proposition~\ref{Prop:AI}). Thus the condition becomes
$$q > 2(r+1)\delta^2 = 2(2+1)(24)^2 = 2 \cdot 3 \cdot 576 = 3456.$$ Since we assume $m \geq 23$ and $q = 2^m$, we have $q = 2^{23} = 8{,}388{,}608 \gg 3456$. Therefore, Theorem~\ref{thm lang weil versione tredici terzi} applies, and we obtain the estimate
$$|\mathcal{S}'(\mathbb{F}_q)| \geq q^2 - (\delta-1)(\delta-2)q^{3/2} - 5\delta^{13/3}q = q^2 - 23 \cdot 22 \cdot q^{3/2} - 5 \cdot (24)^{13/3}q.$$

Note that $23 \cdot 22 = 506$ and $24^{13/3} = 24^4 \cdot 24^{1/3} \approx 331{,}776 \cdot 2.88 \approx 955{,}917$, so the bound is well-defined and valid for $m \geq 23$.
The number of corresponding non-zero affine quadruples is $(q-1)$ times this value.

Thus, a lower bound for $|\Theta|$, the number of valid solutions, is
$$
t(q) = \left(q^2 - 506 q^{3/2} - 5 \cdot (24)^{13/3}q\right)(q-1) - 312q(q-1).
$$
To ensure $\Theta$ is non-empty, we require $t(q) > 0$. Dividing by $q(q-1)$, we analyze the positivity of
$$
s(q) := \frac{t(q)}{q(q-1)} = q - 506 q^{1/2} - 5 \cdot (24)^{13/3} - 312.
$$
This inequality holds if $q^{1/2}$ is sufficiently large. The condition $s(x^2) > 0$ holds for $x > 2456$, which implies that $|\Theta| > 0$ for $q > (2456)^2$. This threshold on $q$ is met when $m \geq 23$.
Therefore, for $m \geq 23$, the set of valid solutions $\Theta$ is non-empty, and our claim is shown.
\end{proof}

\begin{remark}
The threshold $m \geq 23$ is sufficient for our proof technique but may not be optimal. Computational evidence  suggests that $C_u$ fails to be a permutation for much smaller values of $m$. The specific threshold arises from the Lang-Weil bound requirement $q > 2(r+1)\delta^2 = 3456$ (satisfied by $q = 2^{23} = 8{,}388{,}608$), combined with our degree estimates for the surface $\mathcal{S}'$ (degree $\leq 24$) and intersection bounds with degenerate loci (degree $\leq 288$). Improved degree estimates or alternative geometric approaches might lower this threshold significantly.
\end{remark}


\section{Conclusion and open problems}
\label{sec:conclusion}

We have developed systematic algebraic geometry techniques for establishing non-permutation results for trivariate functions over finite fields. Our main contributions include: first, necessary polynomial conditions (Theorems~\ref{NecessaryConditions}--\ref{NecessaryConditions3}) that force trivariate functions to be non-permutations, based on differential equation analysis and elimination theory; second, a complete classification showing that for $m=3$, exactly 20 parameter choices yield permutations (Proposition~\ref{prop:3complete_characterization}), falling into two structural families (Type I and Type II) with distinct algebraic properties; and third, the resolution of a conjecture by Beierle et al. (Theorem~\ref{thm:beierle_conj}) using Lang-Weil bounds and component analysis, proving $C_u$ is not a permutation for all odd $m \geq 23$.

These results demonstrate the power of number theory and algebraic geometry methods--particularly elimination theory, projective varieties, and point-counting techniques--for analyzing the permutation properties of multivariate cryptographic functions. The techniques developed here provide a template for studying other classes of vectorial Boolean functions.

Several fundamental questions remain:

\begin{open}
\label{open:type_I}
Do any Type I permutations $(0, a_2, a_3, 0)$ extend to infinite families for infinitely many $m$?
\end{open}

The 13 Type I permutations found for $m=3$ may be sporadic or may represent infinite families with different structural properties. Our proof techniques do not directly apply to Type I due to vanishing of $a_1$ and $a_4$.

\begin{open}
\label{open:other_families}
Are there parameter families beyond Types I and II that yield permutations for infinitely many $m$?
\end{open}

Theorem~\ref{NecessaryConditions3} shows that generic parameters fail for large $m$, but there may be other special families with hidden algebraic structure that we have not yet identified.

\begin{open}
\label{open:threshold}
Determine whether $C_u$ is a permutation for odd $m < 23$. Our theoretical approach requires $m \geq 23$ due to the Lang-Weil bound hypothesis $q > 2(r+1)\delta^2 = 3456$. The function may behave differently for small $m$, particularly $m = 3$. Improved degree estimates or alternative geometric techniques might lower the threshold, or computational methods might resolve specific small cases.
\end{open}

Our current proof establishes $M_0 = 23$. Computational evidence suggests the threshold may be much smaller. Improvements could come from tighter degree bounds, refined Lang-Weil estimates, or direct construction of explicit solutions for specific small $m$.

This work demonstrates the power of algebraic geometry for analyzing multivariate functions. The techniques—particularly elimination theory, projective varieties, and point-counting bounds—provide a template for studying other classes of cryptographic functions.

\section*{Acknowledgments}
The third-named author (PS) thanks the first-named author (DB) for the invitation to Università degli Studi di Perugia and the excellent working conditions. The research of the first-named author (DB) was supported by the Italian National Group for Algebraic and Geometric Structures and their Applications (INdAM - GNSAGA Project, CUP E53C24001950001).

\appendix 

\section{Polynomial definitions}
\label{sec:appendix_poly}

We list below the functions $f(\alpha,\beta,X,Y,Z)$, $g(\beta,X,Y,Z)$, $h(\beta,X,Y,Z)$, $a_i(\beta,Y,Z)$, $i=0,\ldots,21$, needed in Proposition~\ref{prop:equivalence}:
{\tiny
\allowdisplaybreaks
\begin{align*}
f(\alpha,\beta,X,Y,Z)&:=\frac{\alpha^3 + \alpha^2 X + \alpha X^2 + \beta^2 Z u}{\beta^2 u + Y^2 u},\\
g(\beta,X,Y,Z)  &:=\beta^{12} u^9 + \beta^{12} u^6 + \beta^{12} u^3+ \beta^{12} + \beta^9 X Z^2 u^{10} + \beta^9 X Z^2 u^4 + \beta^9 Y^3 u^9 + \beta^9 Y^3 u^3\\
                &+\beta^8 X Y Z^2 u^{10} + \beta^8 X Y Z^2 u^4 + \beta^8 Y^4 u^6 + \beta^8 Y^4 + \beta^6 X^2 Z^4 u^8 + \beta^6 X^2 Z^4 u^5 \\
                &+ \beta^6 Y^6 u^6+ \beta^6 Y^6 u^3 + \beta^5 X^7 u^8 + \beta^5 X^7 u^5 + \beta^5 X Y^4 Z^2 u^{10} + \beta^5 X Y^4 Z^2 u^7 \\
                &+ \beta^4 X^7 Y u^8 + \beta^4 X^7 Y u^5+ \beta^4 X^2 Y^2 Z^4 u^8 + \beta^4 X^2 Y^2 Z^4 u^5 + \beta^4 X Y^5 Z^2 u^{10}\\
                &+ \beta^4 X Y^5 Z^2 u^7 + \beta^4 Y^8 u^9 + \beta^4 Y^8 + \beta^3 X^7 Y^2 u^5 + \beta^3 X^4 Y^4 Z u^6 + \beta^3 X^3 Z^6 u^6\\
                &+\beta^3 X^3 Z^6 u^3 + \beta^3 X^2 Y^3 Z^4 u^5 + \beta^3 X Y^6 Z^2 u^4 + \beta^3 Y^9 u^3 + \beta^3 Y^2 Z^7 u^4  \\
                &+ \beta^2 X^8 Z^2 u^6 + \beta^2 X^7 Y^3 u^8 + \beta^2 X^5 Y^2 Z^3 u^7 + \beta^2 X^4 Y^5 Z u^6 + \beta^2 X^3 Y Z^6 u^3\\
                &+\beta^2 X Y^7 Z^2 u^7 + \beta^2 X Y^7 Z^2 u^4 + \beta^2 X Z^9 u^5 + \beta^2 Y^{10} u^6 + \beta^2 Y^3 Z^7 u^4\\
                &+ \beta X^7 Y^4 u^8 + \beta X^4 Y^6 Z u^6 + \beta X^3 Y^2 Z^6 u^6 + \beta X^3 Y^2 Z^6 u^3 + \beta X^2 Y^5 Z^4 u^5\\
                &+ \beta X Y^8 Z^2 u^7 + \beta Y^{11} u^9 + \beta Y^4 Z^7 u^4,\\
h(\beta,X,Y,Z)  &:=u^5(\beta + Y)^2(\beta^3 X^6 u^3 + \beta^3 X^6 + \beta^3 Y^4 Z^2 u^5 + \beta^3 Y^4 Z^2 u^2 + \beta^2 X^6 Y u^3 + \beta^2 X^6 Y\\
                &+ \beta^2 Y^5 Z^2 u^5 + \beta^2 Y^5 Z^2 u^2 + \beta X^6 Y^2 u^3 + \beta X^6 Y^2 + \beta Y^6 Z^2 u^5 + \beta Y^6 Z^2 u^2\\
                &+X^7 Z^2 u + X^4 Y^2 Z^3 u^2 + X^2 Y Z^6 u + X Y^4 Z^4 u^3 + Y^7 Z^2 u^5 + Z^9),\\
a_0(\beta,Y,Z)  &:=\beta^{24} u^{21} + \beta^{24} u^{18} + \beta^{24} u^{15} + \beta^{24} u^{12} + \beta^{24} u^9 + \beta^{24} u^6 + \beta^{24} u^3+ \beta^{24}\\
                &+ \beta^{21} Y^3 u^{21} + \beta^{21} Y^3 u^{15} + \beta^{21} Y^3 u^9 + \beta^{21} Y^3 u^3 + \beta^{20} Y^4 u^{21} + \beta^{20} Y^4 u^{15}\\
                &+\beta^{20} Y^4 u^9 + \beta^{20} Y^4 u^3 + \beta^{18} Y^6 u^{18} + \beta^{18} Y^6 u^{15} + \beta^{18} Y^6 u^6 + \beta^{18} Y^6 u^3\\
                &+\beta^{17} Y^7 u^{21} + \beta^{17} Y^7 u^{15} + \beta^{17} Y^7 u^9 + \beta^{17} Y^7 u^3 + \beta^{16} Y^8 u^{15} + \beta^{16} Y^8 u^{12}\\
                &+\beta^{16} Y^8 u^3 + \beta^{16} Y^8 + \beta^{15} Y^9 u^{15} + \beta^{15} Y^9 u^3 + \beta^{15} Y^2 Z^7 u^{16} + \beta^{15} Y^2 Z^7 u^4\\
                &+\beta^{14} Y^{10} u^{15} + \beta^{14} Y^{10} u^3 + \beta^{14} Y^3 Z^7 u^{16} + \beta^{14} Y^3 Z^7 u^4 + \beta^{13} Y^{11} u^{15}\\ 
                &+\beta^{13} Y^{11} u^3 + \beta^{13} Y^4 Z^7 u^{16} + \beta^{13} Y^4 Z^7 u^4 + \beta^{12} Y^{12} u^{15} + \beta^{12} Y^{12} u^{12}\\
                &+\beta^{12} Y^{12} u^9 + \beta^{12} Y^{12} u^6 + \beta^{11} Y^{13} u^{15} + \beta^{11} Y^{13} u^3 + \beta^{11} Y^6 Z^7 u^{16}\\
                &+\beta^{11} Y^6 Z^7 u^4+ \beta^{10} Y^{14} u^{15} + \beta^{10} Y^{14} u^3 + \beta^{10} Y^7 Z^7 u^{16} + \beta^{10} Y^7 Z^7 u^4\\
                &+ \beta^9 Y^{15} u^{15} + \beta^9 Y^{15} u^9 + \beta^9 Y^8 Z^7 u^{22} + \beta^9 Y^8 Z^7 u^4 + \beta^8 Y^{16} u^{21} + \beta^8 Y^{16} u^{18}\\
                &+ \beta^8 Y^{16} u^9 + \beta^8 Y^{16} + \beta^8 Y^9 Z^7 u^{22} + \beta^8 Y^9 Z^7 u^{16} + \beta^7 Y^{17} u^{15} + \beta^7 Y^{17} u^3 \\
                &+ \beta^7 Y^{10} Z^7 u^{16} + \beta^7 Y^{10} Z^7 u^4 + \beta^6 Y^{18} u^{15} + \beta^6 Y^{18} u^6 + \beta^6 Y^{11} Z^7 u^{22}\\
                &+ \beta^6 Y^{11} Z^7 u^4 + \beta^6 Y^4 Z^{14} u^{11} + \beta^6 Y^4 Z^{14} u^8 + \beta^5 Y^{19} u^{21} + \beta^5 Y^{19} u^9\\
                &+ \beta^5 Y^{12} Z^7 u^{16} + \beta^5 Y^{12} Z^7 u^4 + \beta^4 Y^{20} u^{21} + \beta^4 Y^{20} u^{12} + \beta^4 Y^{13} Z^7 u^{22}\\
                &+ \beta^4 Y^{13} Z^7 u^{16} + \beta^4 Y^6 Z^{14} u^{11} + \beta^4 Y^6 Z^{14} u^8 + \beta^3 Y^{21} u^{15} + \beta^3 Y^{14} Z^7 u^{16}\\
                &+ \beta^3 Y^7 Z^{14} u^{11} + \beta^3 Z^{21} u^{12} + \beta^2 Y^{22} u^{18} + \beta^2 Y^{15} Z^7 u^{22} + \beta^2 Y^8 Z^{14} u^8\\
                &+ \beta^2 Y Z^{21} u^{12} + \beta Y^{23} u^{21}+ \beta Y^{16} Z^7 u^{22} + \beta Y^9 Z^{14} u^{11} + \beta Y^2 Z^{21} u^{12},\\
a_{1}(\beta,Y,Z)&:=\beta^{21} Z^2 u^{19} + \beta^{21} Z^2 u^{13} + \beta^{21} Z^2 u^7 + \beta^{21} Z^2 u + \beta^{20} Y Z^2 u^{19} + \beta^{20} Y Z^2 u^{13}\\
                &+\beta^{20} Y Z^2 u^7 + \beta^{20} Y Z^2 u + \beta^{18} Y^3 Z^2 u^{19} + \beta^{18} Y^3 Z^2 u^{13} + \beta^{18} Y^3 Z^2 u^7\\
                &+\beta^{18} Y^3 Z^2 u + \beta^{17} Y^4 Z^2 u^{19} + \beta^{17} Y^4 Z^2 u^{13} + \beta^{17} Y^4 Z^2 u^7 + \beta^{17} Y^4 Z^2 u\\
                &+\beta^{15} Y^6 Z^2 u^{13} + \beta^{15} Y^6 Z^2 u + \beta^{14} Y^7 Z^2 u^{13} + \beta^{14} Y^7 Z^2 u + \beta^{13} Y^8 Z^2 u^{13}\\
                &+\beta^{13} Y^8 Z^2 u + \beta^{11} Y^{10} Z^2 u^{13} + \beta^{11} Y^{10} Z^2 u + \beta^{10} Y^{11} Z^2 u^{13} + \beta^{10} Y^{11} Z^2 u\\
                &+\beta^9 Y^{12} Z^2 u^{13} + \beta^9 Y^{12} Z^2 u^7 + \beta^8 Y^{13} Z^2 u^7 + \beta^8 Y^{13} Z^2 u + \beta^7 Y^{14} Z^2 u^{13}\\
                &+\beta^7 Y^{14} Z^2 u + \beta^6 Y^{15} Z^2 u^{13} + \beta^6 Y^{15} Z^2 u^7 + \beta^5 Y^{16} Z^2 u^{19} + \beta^5 Y^{16} Z^2 u^7\\
                &+\beta^4 Y^{17} Z^2 u^{19} + \beta^4 Y^{17} Z^2 u^{13} + \beta^3 Y^{18} Z^2 u^{13} + \beta^3 Y^4 Z^{16} u^9 + \beta^2 Y^{19} Z^2 u^{19}\\
                &+\beta^2 Y^5 Z^{16} u^9 + \beta Y^{20} Z^2 u^{19} + \beta Y^6 Z^{16} u^9,\\
a_{2}(\beta,Y,Z)&:=\beta^{18} Z^4 u^{23} + \beta^{18} Z^4 u^{20} + \beta^{18} Z^4 u^{11} + \beta^{18} Z^4 u^8 + \beta^{16} Y^2 Z^4 u^{23}\\
                &+ \beta^{16} Y^2 Z^4 u^{20} + \beta^{16} Y^2 Z^4 u^{11} + \beta^{16} Y^2 Z^4 u^8 + \beta^{15} Y^3 Z^4 u^{23} + \beta^{15} Y^3 Z^4 u^{11}\\
                &+ \beta^{14} Y^4 Z^4 u^{23} + \beta^{14} Y^4 Z^4 u^{11} + \beta^{13} Y^5 Z^4 u^{23} + \beta^{13} Y^5 Z^4 u^{11} + \beta^{11} Y^7 Z^4 u^{23}\\
                &+ \beta^{11} Y^7 Z^4 u^{11} + \beta^{10} Y^8 Z^4 u^{23} + \beta^{10} Y^8 Z^4 u^{11} + \beta^9 Y^9 Z^4 u^{23} + \beta^9 Y^9 Z^4 u^{17}\\
                &+ \beta^9 Y^2 Z^{11} u^{18} + \beta^9 Y^2 Z^{11} u^6 + \beta^8 Y^{10} Z^4 u^{17} + \beta^8 Y^{10} Z^4 u^{11} + \beta^8 Y^3 Z^{11} u^{18}\\
                &+ \beta^8 Y^3 Z^{11} u^6 + \beta^7 Y^{11} Z^4 u^{23} + \beta^7 Y^{11} Z^4 u^{11} + \beta^6 Y^{12} Z^4 u^{23} + \beta^6 Y^{12} Z^4 u^{17}\\
                &+ \beta^6 Y^{12} Z^4 u^{11} + \beta^6 Y^{12} Z^4 u^8 + \beta^6 Y^5 Z^{11} u^{18} + \beta^6 Y^5 Z^{11} u^6 + \beta^5 Y^{13} Z^4 u^{23}\\
                &+ \beta^5 Y^{13} Z^4 u^{11} + \beta^4 Y^{14} Z^4 u^{17} + \beta^4 Y^{14} Z^4 u^8 + \beta^4 Y^7 Z^{11} u^{18} + \beta^4 Y^7 Z^{11} u^6\\
                &+ \beta^3 Y^{15} Z^4 u^{23} + \beta^3 Y^{15} Z^4 u^{11} + \beta^3 Y^8 Z^{11} u^6 + \beta^3 Y Z^{18} u^{13} + \beta^2 Y^{16} Z^4 u^{20}\\
                &+ \beta^2 Y^{16} Z^4 u^{17} + \beta^2 Y^9 Z^{11} u^{18} + \beta^2 Y^2 Z^{18} u^{13} + \beta Y^{17} Z^4 u^{23} + \beta Y^{17} Z^4 u^{17}\\
                &+ \beta Y^{10} Z^{11} u^{18} + \beta Y^3 Z^{18} u^{13} + Y^{18} Z^4 u^{23} + Y^{18} Z^4 u^{20} + Y^4 Z^{18} u^{13} + Y^4 Z^{18} u^{10},\\
a_{3}(\beta,Y,Z)&:=\beta^{15} Z^6 u^{21} + \beta^{15} Z^6 u^{15} + \beta^{15} Z^6 u^9 + \beta^{15} Z^6 u^3 + \beta^{14} Y Z^6 u^{21} + \beta^{14} Y Z^6 u^{15}\\
                &+\beta^{14} Y Z^6 u^9 + \beta^{14} Y Z^6 u^3 + \beta^{13} Y^2 Z^6 u^{21} + \beta^{13} Y^2 Z^6 u^{15} + \beta^{13} Y^2 Z^6 u^9\\
                &+\beta^{13} Y^2 Z^6 u^3 + \beta^{11} Y^4 Z^6 u^{21} + \beta^{11} Y^4 Z^6 u^{15} + \beta^{11} Y^4 Z^6 u^9 + \beta^{11} Y^4 Z^6 u^3\\
                &+\beta^{10} Y^5 Z^6 u^{21} + \beta^{10} Y^5 Z^6 u^{15} + \beta^{10} Y^5 Z^6 u^9 + \beta^{10} Y^5 Z^6 u^3 + \beta^9 Y^6 Z^6 u^{15}\\
                &+\beta^9 Y^6 Z^6 u^3 + \beta^8 Y^7 Z^6 u^{21} + \beta^8 Y^7 Z^6 u^9 + \beta^7 Y^8 Z^6 u^{21} + \beta^7 Y^8 Z^6 u^{15}+ \beta^7 Y^8 Z^6 u^9\\
                &+\beta^7 Y^8 Z^6 u^3 + \beta^6 Y^9 Z^6 u^{15} + \beta^6 Y^9 Z^6 u^3 + \beta^5 Y^{10} Z^6 u^{21} + \beta^5 Y^{10} Z^6 u^{15}\\
                &+\beta^5 Y^{10} Z^6 u^9 + \beta^5 Y^{10} Z^6 u^3 + \beta^4 Y^{11} Z^6 u^{21} + \beta^4 Y^{11} Z^6 u^9 + \beta^3 Y^{12} Z^6 u^{21}\\
                &+ \beta^3 Y^{12} Z^6 u^{15} + \beta^3 Y^{12} Z^6 u^9 + \beta^2 Y^{13} Z^6 u^{15} + \beta Y^{14} Z^6 u^{15},\\
a_{4}(\beta,Y,Z)&:=\beta^{15} Y^4 Z u^{18} + \beta^{15} Y^4 Z u^6 + \beta^{14} Y^5 Z u^{18} + \beta^{14} Y^5 Z u^6 + \beta^{13} Y^6 Z u^{18}\\
                &+ \beta^{13} Y^6 Z u^6 + \beta^{12} Z^8 u^{19} + \beta^{12} Z^8 u^{16} + \beta^{12} Z^8 u^{13} + \beta^{12} Z^8 u^{10}\\
                &+\beta^{11} Y^8 Z u^{18} + \beta^{11} Y^8 Z u^6 + \beta^{10} Y^9 Z u^{18} + \beta^{10} Y^9 Z u^6 + \beta^9 Y^{10} Z u^{18}\\
                &+ \beta^9 Y^{10} Z u^6 + \beta^9 Y^3 Z^8 u^{19} + \beta^9 Y^3 Z^8 u^{13} + \beta^8 Y^4 Z^8 u^{16} + \beta^8 Y^4 Z^8 u^{10}\\
                &+ \beta^7 Y^{12} Z u^{18} + \beta^7 Y^{12} Z u^6 + \beta^6 Y^{13} Z u^{18} + \beta^6 Y^{13} Z u^6 + \beta^6 Y^6 Z^8 u^{19}\\
                &+ \beta^6 Y^6 Z^8 u^{10} + \beta^5 Y^{14} Z u^{18} + \beta^5 Y^{14} Z u^6 + \beta^4 Y^8 Z^8 u^{16} + \beta^4 Y^8 Z^8 u^{13}\\
                &+ \beta^3 Y^{16} Z u^{18} + \beta^3 Y^2 Z^{15} u^{14} + \beta^3 Y^2 Z^{15} u^8 + \beta^2 Y^{17} Z u^{18} + \beta^2 Y^{10} Z^8 u^{19}\\
                &+ \beta^2 Y^{10} Z^8 u^{10} + \beta^2 Y^3 Z^{15} u^{14} + \beta^2 Y^3 Z^{15} u^8 + \beta Y^{18} Z u^{18} + \beta Y^{11} Z^8 u^{19}\\
                &+ \beta Y^{11} Z^8 u^{13} + \beta Y^4 Z^{15} u^{14} + \beta Y^4 Z^{15} u^8 + Y^{12} Z^8 u^{19} + Y^{12} Z^8 u^{16},\\
a_{5}(\beta,Y,Z)&:=\beta^9 Z^{10} u^{11} + \beta^9 Z^{10} u^5 + \beta^8 Y Z^{10} u^{11} + \beta^8 Y Z^{10} u^5 + \beta^6 Y^3 Z^{10} u^{11}\\
                &+ \beta^6 Y^3 Z^{10} u^5 + \beta^4 Y^5 Z^{10} u^{11} + \beta^4 Y^5 Z^{10} u^5 + \beta^3 Y^6 Z^{10} u^5 + \beta^2 Y^7 Z^{10} u^{11}\\
                &+ \beta Y^8 Z^{10} u^{11}\\
a_{6}(\beta,Y,Z)&:=\beta^9 Y^4 Z^5 u^{20} + \beta^9 Y^4 Z^5 u^8 + \beta^8 Y^5 Z^5 u^{20} + \beta^8 Y^5 Z^5 u^8 + \beta^6 Y^7 Z^5 u^{20}\\
                &+\beta^6 Y^7 Z^5 u^8 + \beta^6 Z^{12} u^{15} + \beta^6 Z^{12} u^{12} + \beta^6 Z^{12} u^9 + \beta^6 Z^{12} u^6 + \beta^4 Y^9 Z^5 u^{20}\\
                &+ \beta^4 Y^9 Z^5 u^8 + \beta^4 Y^2 Z^{12} u^{15} + \beta^4 Y^2 Z^{12} u^{12} + \beta^4 Y^2 Z^{12} u^9 + \beta^4 Y^2 Z^{12} u^6\\
                &+\beta^3 Y^{10} Z^5 u^8 + \beta^3 Y^3 Z^{12} u^9 + \beta^2 Y^{11} Z^5 u^{20} + \beta^2 Y^4 Z^{12} u^{15} + \beta^2 Y^4 Z^{12} u^{12}\\
                &+\beta^2 Y^4 Z^{12} u^6 + \beta Y^{12} Z^5 u^{20} + \beta Y^5 Z^{12} u^9 + Y^6 Z^{12} u^{15} + Y^6 Z^{12} u^{12},\\
a_{7}(\beta,Y,Z)&:=\beta^{15} Y^2 u^{17} + \beta^{15} Y^2 u^5 + \beta^{14} Y^3 u^{17} + \beta^{14} Y^3 u^5 + \beta^{13} Y^4 u^{17} + \beta^{13} Y^4 u^5\\
                &+\beta^{11} Y^6 u^{17} + \beta^{11} Y^6 u^5 + \beta^{10} Y^7 u^{17} + \beta^{10} Y^7 u^5 + \beta^9 Y^8 u^{17} + \beta^9 Y^8 u^5\\
                &+\beta^7 Y^{10} u^{17} + \beta^7 Y^{10} u^5 + \beta^6 Y^{11} u^{17} + \beta^6 Y^{11} u^5 + \beta^5 Y^{12} u^{17} + \beta^5 Y^{12} u^5\\
                &+\beta^3 Y^{14} u^{17} + \beta^3 Z^{14} u^{13} + \beta^2 Y^{15} u^{17} + \beta^2 Y Z^{14} u^{13} + \beta Y^{16} u^{17}\\
                &+ \beta Y^2 Z^{14} u^{13},\\
a_{8}(\beta,Y,Z)&:=\beta^6 Y^8 Z^2 u^{15} + \beta^6 Y^8 Z^2 u^{12} + \beta^4 Y^{10} Z^2 u^{15} + \beta^4 Y^{10} Z^2 u^{12} + \beta^3 Y^{11} Z^2 u^{15}\\
                &+\beta^3 Y^4 Z^9 u^{16} + \beta^3 Y^4 Z^9 u^{10} + \beta^2 Y^{12} Z^2 u^{12} + \beta^2 Y^5 Z^9 u^{16} + \beta^2 Y^5 Z^9 u^{10}\\
                &+\beta Y^{13} Z^2 u^{15} + \beta Y^6 Z^9 u^{16} + \beta Y^6 Z^9 u^{10},\\
a_{9}(\beta,Y,Z)&:=\beta^9 Y^2 Z^4 u^{19} + \beta^9 Y^2 Z^4 u^7 + \beta^8 Y^3 Z^4 u^{19} + \beta^8 Y^3 Z^4 u^7 + \beta^6 Y^5 Z^4 u^{19}\\
                &+\beta^6 Y^5 Z^4 u^7 + \beta^4 Y^7 Z^4 u^{19} + \beta^4 Y^7 Z^4 u^7 + \beta^3 Y^8 Z^4 u^{13} + \beta^3 Y^8 Z^4 u^7\\
                &+\beta^2 Y^9 Z^4 u^{19} + \beta^2 Y^9 Z^4 u^{13} + \beta Y^{10} Z^4 u^{19} + \beta Y^{10} Z^4 u^{13},\\
a_{{10}}(\beta,Y,Z)&:=\beta^3 Y^5 Z^6 u^{17} + \beta^2 Y^6 Z^6 u^{17} + \beta Y^7 Z^6 u^{17} + Y^8 Z^6 u^{17} + Y^8 Z^6 u^{14},\\
a_{{11}}(\beta,Y,Z)&:=\beta^3 Y^2 Z^8 u^9 + \beta^2 Y^3 Z^8 u^9 + \beta Y^4 Z^8 u^9,\\
a_{{12}}(\beta,Y,Z)&:=\beta^9 Z^3 u^{18} + \beta^9 Z^3 u^{12} + \beta^8 Y Z^3 u^{18} + \beta^8 Y Z^3 u^{12} + \beta^6 Y^3 Z^3 u^{18} + \beta^6 Y^3 Z^3 u^{12}\\
                    &+\beta^4 Y^5 Z^3 u^{18} + \beta^4 Y^5 Z^3 u^{12} + \beta^3 Y^6 Z^3 u^{18} + \beta^2 Y^7 Z^3 u^{12} + \beta Y^8 Z^3 u^{12},\\
a_{{13}}(\beta,Y,Z)&:=0,\\
a_{{14}}(\beta,Y,Z)&:=\beta^9 Y u^{19} + \beta^9 Y u^{13} + \beta^8 Y^2 u^{19} + \beta^8 Y^2 u^{13} + \beta^6 Y^4 u^{19} + \beta^6 Y^4 u^{10}\\
                &+ \beta^4 Y^6 u^{19} + \beta^4 Y^6 u^{10} + \beta^3 Y^7 u^{13} + \beta^3 Z^7 u^{14} + \beta^2 Y^8 u^{19} + \beta^2 Y^8 u^{13}\\
                &+ \beta^2 Y^8 u^{10} + \beta^2 Y Z^7 u^{14} + \beta Y^9 u^{19} + \beta Y^2 Z^7 u^{14},\\
a_{{15}}(\beta,Y,Z)&:=\beta^3 Y^4 Z^2 u^{17} + \beta^3 Y^4 Z^2 u^{11} + \beta^2 Y^5 Z^2 u^{17} + \beta^2 Y^5 Z^2 u^{11} + \beta Y^6 Z^2 u^{17} + \beta Y^6 Z^2 u^{11},\\
a_{16}(\beta,Y,Z)&:=\beta^3 Y Z^4 u^{15} + \beta^2 Y^2 Z^4 u^{15} + \beta Y^3 Z^4 u^{15} + Y^4 Z^4 u^{15} + Y^4 Z^4 u^{12},\\
a_{{17}}(\beta,Y,Z)&:=0,\\
a_{{18}}(\beta,Y,Z)&:=\beta^3 Y^2 Z u^{16} + \beta^2 Y^3 Z u^{16} + \beta Y^4 Z u^{16},\\
a_{{19}}(\beta,Y,Z)&:=0,\\
a_{{20}}(\beta,Y,Z)&:=0,\\
a_{{21}}(\beta,Y,Z)&:=\beta^3 u^{15} + \beta^2 Y u^{15} + \beta Y^2 u^{15}.
\end{align*}
}

\end{document}